\documentclass[11pt,letterpaper]{article}
\usepackage[T1]{fontenc}
\usepackage{lmodern}
\usepackage{amsmath,amssymb,amsthm,mathtools}
\usepackage[margin=1in]{geometry}
\usepackage{microtype}
\usepackage{tikz}
\usepackage{float}
\usepackage{xcolor}
\usepackage[colorlinks=true,linkcolor=blue!45!black,urlcolor=blue!45!black,citecolor=blue!45!black]{hyperref}
\hypersetup{
  pdftitle={Five-Point Hyperbolas on Power Curves and Near Lame-Oval Vertices},
  pdfauthor={George M. Georgiou},
  pdfkeywords={five-point conic, Lame oval, superellipse, power curve,
    equi-affine curvature, osculating conic, generalized Vandermonde determinant,
    Chebyshev system, Croft-Falconer-Guy Problem A33}
}

\newtheorem{theorem}{Theorem}
\newtheorem{proposition}[theorem]{Proposition}
\newtheorem{lemma}[theorem]{Lemma}
\newtheorem{corollary}[theorem]{Corollary}
\theoremstyle{remark}
\newtheorem{remark}[theorem]{Remark}
\newcommand{\R}{\mathbb{R}}

\title{Five-Point Hyperbolas on Power Curves and Near Lam\'e-Oval Vertices}
\author{\small George M. Georgiou\thanks{\small School of Computer Science and Engineering, California State University, San Bernardino}\\[3pt]
\small \href{mailto:georgiou@csusb.edu}{georgiou@csusb.edu}}
\date{}

\begin{document}
\maketitle

\begin{abstract}
Problem A33 of Croft--Falconer--Guy records Reznick's question about infinite
plane sets for which every five-point subset determines an ellipse or,
respectively, a hyperbola, and suggests that
\(|x|^{2.001}+|y|^{2.001}=1\) might yield only ellipses.  We give two results.
First, if \(p>2\) and \(a>0\), every five distinct points of the power curve
\(u=ay^p\), \(y>0\), determine a nondegenerate hyperbola; bounded subarcs
therefore give nonconic rectifiable examples for the hyperbolic side of
Reznick's question.  Second, every fixed one-sided five-point profile,
contracted toward an axial vertex of the Lam\'e oval
\(|x|^p+|y|^p=1\), eventually determines a nondegenerate hyperbola.
Consequently, the suggested Lam\'e oval with \(p=2.001\) has five-point subsets
that determine nondegenerate hyperbolas, so it does not yield only ellipses.
Symmetric profiles straddling the same vertex, however, determine ellipses.  We
also separate these results from the classical osculating-conic criterion
supplied by equi-affine curvature.
\end{abstract}

\medskip

\noindent{\small\emph{Key words and phrases.}
Five-point conic, Lam\'e oval, superellipse, power curve, equi-affine
curvature, osculating conic, generalized Vandermonde determinant, Chebyshev
system, Croft--Falconer--Guy Problem~A33.}

\paragraph{Relation to Problem A33.}
Problem A33 attributes the infinite-set question to Reznick and records the
Lam\'e-oval suggestion impersonally \cite[p.~43]{CFG}.  Theorem~\ref{thm:main}
and Corollary~\ref{cor:2001} give a direct negative answer to that suggestion:
the oval \(|x|^{2.001}+|y|^{2.001}=1\) has five-point subsets that determine
nondegenerate hyperbolas and therefore does not have the ``only ellipses''
property proposed in A33.  Classical
osculating-conic theory already gives a short route to the existence of
hyperbolic five-point subsets near points of negative equi-affine curvature.
Thorbergsson and Umehara noted that they were unaware of results on
sextactic points for curves outside their strictly convex class
\cite[Introduction]{TU}; the flat-vertex geometry considered here belongs to
that broader regime.  This last observation is included only as historical
context concerning work outside their strictly convex setting; it is not a
general priority claim.
The contributions below go in different directions: Proposition~\ref{prop:model}
classifies \emph{every} five-point subset of a power arc, while
Theorem~\ref{thm:main} controls each prescribed one-sided profile as it is
contracted toward a flat Lam\'e-oval vertex.

\section{Statements}

Unless stated otherwise, \(p>2\) is real.  Put
\[
 S_p=\{(x,y)\in\R^2:|x|^p+|y|^p=1\},
 \qquad
 \Gamma_{p,a}=\{(ay^p,y):y>0\}\quad(a>0).
\]
We work near the vertex \((1,0)\) in the coordinates
\begin{equation}\label{eq:coords}
 u=1-x,\qquad y=y,
\end{equation}
which is an affine change and therefore preserves the type of a conic.  We
write a conic as
\begin{equation}\label{eq:conic}
 F+Ey+Cy^2+Du+B\,uy+A\,u^2=0,
\end{equation}
whose quadratic part has discriminant \(\Delta=B^2-4AC\): the conic is of
hyperbolic type when \(\Delta>0\) and of elliptic type when \(\Delta<0\).

We record once and for all the following elementary fact, which supplies both
uniqueness and nondegeneracy throughout.

\begin{lemma}\label{lem:convex}
For \(p>1\), the Lam\'e oval \(S_p\) is the boundary of a strictly convex
body, while \(\Gamma_{p,a}\) is a strictly convex graph for every \(a>0\).
Consequently no three distinct points of either curve are collinear, and any
five distinct points of either curve lie on a unique nondegenerate conic.
\end{lemma}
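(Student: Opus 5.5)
The plan is to prove the three assertions in order: strict convexity, then no three collinear points, then existence, uniqueness and nondegeneracy of the conic through five points.

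\emph{Strict convexity of \(S_p\).} Let \(N(x,y)=(|x|^p+|y|^p)^{1/p}\). For \(p>1\) this is the \(\ell^p\) norm, so by Minkowski's inequality the unit ball \(K=\{N\le 1\}\) is a compact convex body with boundary \(S_p\). For strictness, take distinct \(P,Q\in S_p\) and \(t\in(0,1)\). Equality in Minkowski's inequality for \(1<p<\infty\) forces \(P\) and \(Q\) to be nonnegative multiples of one another, and with \(N(P)=N(Q)=1\) this gives \(P=Q\). Hence \(N(tP+(1-t)Q)<1\), so the open segment lies in the interior of \(K\). Alternatively, one can use strict convexity of \(s\mapsto|s|^p\): the function \(g(x,y)=|x|^p+|y|^p\) satisfies \(g(tP+(1-t)Q)<1\) unless \(P=Q\).

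\emph{Strict convexity of \(\Gamma_{p,a}\).} Here \(u=ay^p\) with \(u''=ap(p-1)y^{p-2}>0\) for \(y>0\). So \(\Gamma_{p,a}\) is the graph, over \(y\in(0,\infty)\), of a strictly convex function.

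\emph{No three collinear points.} Suppose three distinct points of a strictly convex curve lie on a line. Then the middle one is a convex combination of the other two. For \(S_p\) this would put it in the interior of \(K\), which is impossible. For the graph it would give equality in the strict convexity inequality for \(y\mapsto ay^p\), again impossible.

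\emph{Existence and uniqueness of the conic.} Conics \eqref{eq:conic} form a projective space of dimension \(5\), and each point imposes one linear condition. Five points therefore give a solution space of dimension at least \(1\), so some conic passes through them. We show that every conic \(Q\) through five such points is nondegenerate, and that the conic is unique.

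\emph{Nondegeneracy.} A degenerate real conic is a pair of lines, a double line, a single point, or empty. A single point or the empty set cannot contain five distinct points. For a pair of lines or a double line, the five points would be covered by at most two lines, so by pigeonhole one line contains at least three of them, contradicting the no-three-collinear property. So \(Q\) is nondegenerate.

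\emph{Uniqueness.} Suppose two distinct conics \(Q_1,Q_2\) pass through the five points. Pick a sixth point \(R\) on the line through two of the five points, with \(R\) different from those two, and form the combination \(\lambda Q_1+\mu Q_2\) that also vanishes at \(R\). This combination is nonzero because \(Q_1,Q_2\) are linearly independent. It meets that line in three points, so by Bézout (a quadratic restricted to a line with three zeros vanishes identically on it) it contains the whole line. It is therefore degenerate, yet it still passes through the original five points, contradicting nondegeneracy. So the solution space is exactly one-dimensional and the conic is unique.

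The only point needing care is the degenerate-conic classification over \(\R\). A quadratic form that vanishes on five points, no three collinear, cannot factor into linear forms, and that is precisely what the pigeonhole argument above uses. Everything else is routine.
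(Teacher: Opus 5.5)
Your argument is correct. For strict convexity and for the no-three-collinear step it is essentially the paper's proof. Your second option, strict convexity of \(s\mapsto|s|^p\), is exactly the paper's route, and the Minkowski equality case is a repackaging of the same fact.

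The real difference is in the five-point theorem. The paper proves uniqueness first, independently of nondegeneracy. It uses B\'ezout's bound of four common points for two conics without a common component, plus a case split when they share a line \(L\): at most two of the points lie on \(L\), and at most one more at the meeting point of the residual lines. Nondegeneracy is then checked separately.

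You reverse the order. You first show that no nonzero solution of the five incidence conditions can factor into real linear forms. You then get uniqueness from the pencil trick: the member \(\lambda Q_1+\mu Q_2\) through a sixth point on a chord must contain the chord and hence factor.

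Your route needs only the one-variable root count and divisibility by the linear form of a line. It avoids the intersection-count form of B\'ezout altogether and makes the whole lemma rest on one pigeonhole observation. The paper's route, in exchange, gives uniqueness without appealing to the real classification of degenerate conics.

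One bookkeeping remark: your list of degenerate real loci should also include a single line, which occurs when \(A=B=C=0\) but \((D,E)\neq(0,0)\). Your ``at most two lines'' pigeonhole already covers that case.
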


\begin{proof}
For \(p>1\) the function \(t\mapsto|t|^p\) is strictly convex, hence so is
\((x,y)\mapsto|x|^p+|y|^p\); its sublevel set \(\{|x|^p+|y|^p\le1\}\) is
therefore a strictly convex body, and the boundary of such a body contains no
line segment.  Concretely, on \(0<y<1\) the graph \(x=(1-y^p)^{1/p}\) satisfies
\[
 \frac{d^2x}{dy^2}=-(p-1)y^{p-2}(1-y^p)^{1/p-2}<0,
\]
and \(u=ay^p\) satisfies \(u''=ap(p-1)y^{p-2}>0\) on \(y>0\).  A line therefore
meets either curve in at most two points, so no three of the points are
collinear.  The standard five-point theorem now gives a unique nondegenerate
conic \cite[\S2B]{TU}.  For completeness, existence follows from the five
homogeneous linear incidence conditions on the six conic coefficients.  If
two distinct conics passed through the points, B\'ezout's theorem would allow
at most four intersections unless they shared a line component \(L\).  In the
latter case at most two of the five points could lie on \(L\), and every common
point off \(L\) would have to be the single intersection of the two residual
lines.  Thus at most \(2+1=3\) of the five points could be common, again a
contradiction.  Finally, a degenerate real conic containing five distinct real
points cannot be of definite rank two, since its real locus would consist of a
single point.  Otherwise it is a union of two real lines, possibly coincident,
forcing three of the five points onto one line.  Hence the conic is
nondegenerate.
\end{proof}

\begin{theorem}\label{thm:main}
Fix a real number \(p>2\) and fixed numbers \(0<t_1<t_2<t_3<t_4<t_5\).  For
\(0<\varepsilon<t_5^{-1}\) put
\[
 P_i(\varepsilon)
 =\left(\bigl(1-(t_i\varepsilon)^p\bigr)^{1/p},\,t_i\varepsilon\right)
 \in S_p .
\]
There is an \(\varepsilon_0=\varepsilon_0(p,t_1,\dots,t_5)>0\) such that,
whenever
\(0<\varepsilon<\varepsilon_0\), the unique conic through
\(P_1(\varepsilon),\dots,P_5(\varepsilon)\) is a nondegenerate hyperbola.
\end{theorem}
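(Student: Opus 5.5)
In the coordinates \eqref{eq:coords}, I will blow up the vertex anisotropically so that the five points converge to five distinct points on a power curve. I will then prove the statement directly for that power curve, using a sign argument on generalized Vandermonde determinants, and finally pass back to \(\varepsilon>0\) by continuity.

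\emph{Step 1 (rescaling).} In the coordinates \eqref{eq:coords}, \(P_i(\varepsilon)\) has
\[
y=t_i\varepsilon,\qquad u_i=1-\bigl(1-(t_i\varepsilon)^p\bigr)^{1/p}=\tfrac1p (t_i\varepsilon)^p+O(\varepsilon^{2p}).
\]
I introduce \(t=y/\varepsilon\) and \(w=u/\varepsilon^p\). This is a linear change of coordinates, so it preserves the type of a conic. The rescaled points are \(Q_i(\varepsilon)=(t_i,w_i(\varepsilon))\), and \(w_i(\varepsilon)\to t_i^p/p\) as \(\varepsilon\to0\). The limit points \(Q_i(0)\) are five distinct points on \(\Gamma\)-type curve \(w=\tfrac1p t^p\), \(t>0\), which is \(\Gamma_{p,1/p}\) with the coordinate roles relabelled.

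\emph{Step 2 (the model curve).} I will show directly that five distinct points \((t_i,a t_i^p)\), \(0<t_1<\dots<t_5\), lie on a conic with \(\Delta>0\); this is the content of Proposition~\ref{prop:model}. Write the conic as
\[
F+Et+Ct^2+Dw+Btw+Aw^2=0.
\]
Along the curve its six monomials become multiples of \(t^{0},t^{1},t^{2},t^{p},t^{p+1},t^{2p}\). Because \(p>2\), these exponents are strictly increasing: \(0<1<2<p<p+1<2p\). By Lemma~\ref{lem:convex} the incidence matrix has rank \(5\), so the coefficient vector is given by Cramer's rule: up to one common nonzero factor, the \(j\)-th coefficient in this exponent order is \((-1)^j\) times a positive power of \(a\) times the generalized Vandermonde determinant \(\det(t_i^{\alpha_k})_{k\ne j}\).

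The key input is the classical fact that the functions \(t^{\alpha}\) with increasing real exponents form a Chebyshev system on \((0,\infty)\). Hence every such determinant with increasing nodes is strictly positive. The coefficients therefore alternate in sign along \(F,E,C,D,B,A\). In particular \(C\) (position \(2\)) and \(A\) (position \(5\)) are nonzero with opposite signs, so \(-4AC>0\) and \(\Delta=B^2-4AC>0\). Lemma~\ref{lem:convex} gives nondegeneracy, so the limit conic is a nondegenerate hyperbola with \(\Delta\) bounded away from \(0\).

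\emph{Step 3 (continuity).} For \(\varepsilon\ge0\) small, the coefficient vector of the conic through \(Q_1(\varepsilon),\dots,Q_5(\varepsilon)\) is given by the same six signed \(5\times5\) minors. These minors are continuous in \(\varepsilon\) and do not all vanish at \(\varepsilon=0\). So \(\Delta(\varepsilon)\), computed from these minors, is continuous and \(\Delta(0)>0\). This gives an \(\varepsilon_0\) with \(\Delta(\varepsilon)>0\) for \(0<\varepsilon<\varepsilon_0\).

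Undoing the linear rescaling and the affine change \eqref{eq:coords} preserves the sign of \(\Delta\). Lemma~\ref{lem:convex} applied to \(S_p\) guarantees that the conic through \(P_1(\varepsilon),\dots,P_5(\varepsilon)\) is unique and nondegenerate, so it is a nondegenerate hyperbola.

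The main obstacle is Step 2, and specifically the bookkeeping of the Cramer signs. One must verify that after ordering the monomials by exponent, \(A\) and \(C\) really land in positions of opposite parity, and that the powers of \(a\) multiplying the columns are positive and so cannot flip signs. The positivity of generalized Vandermonde determinants can be taken as standard, via Descartes' rule of signs for generalized polynomials. It is exactly the hypothesis \(p>2\) that makes the exponent order \(0,1,2,p,p+1,2p\) valid; for \(p<2\) the order changes and the argument must be redone.
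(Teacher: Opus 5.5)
Your proposal is correct and follows essentially the paper's proof. The paper divides the columns of the evaluation matrix by $1,\varepsilon,\varepsilon^2,\varepsilon^p,\varepsilon^{p+1},\varepsilon^{2p}$, which is exactly your anisotropic coordinate change $t=y/\varepsilon$, $w=u/\varepsilon^p$; it then applies the alternating-minor sign pattern of Proposition~\ref{prop:model} to the limit model $\Gamma_{p,1/p}$ and passes back by continuity. The only cosmetic difference is that the paper normalizes $A=1$ and shows $\Delta_\varepsilon/\varepsilon^{2p-2}\to b^2-4c>0$, whereas you use continuity of the discriminant of the unnormalized signed-minor vector; this is equivalent because $\Delta$ is homogeneous of degree two. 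Note also that the positivity of the generalized Vandermonde minors, not merely their nonvanishing from the Chebyshev property, is what you need, and Lemma~\ref{lem:vander} supplies it.
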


\begin{corollary}\label{cor:2001}
The curve \(|x|^{2.001}+|y|^{2.001}=1\) has five-point subsets whose conic is a
hyperbola.  Thus the specific ``only ellipses'' suggestion in Problem A33 is
false.
\end{corollary}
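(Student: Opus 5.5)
The corollary follows from Theorem~\ref{thm:main} by specialization; no new geometric input is needed. I will set \(p=2.001\), which satisfies the hypothesis \(p>2\) of Theorem~\ref{thm:main}. I then fix any concrete profile, say \(t_i=i\) for \(i=1,\dots,5\), so that \(0<t_1<\dots<t_5\). Theorem~\ref{thm:main} supplies an \(\varepsilon_0=\varepsilon_0(2.001,1,2,3,4,5)>0\). I choose any \(\varepsilon\) with \(0<\varepsilon<\min(\varepsilon_0,t_5^{-1})=\min(\varepsilon_0,1/5)\), so that the points \(P_i(\varepsilon)\) are defined.

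Next I verify that these five points form an admissible subset of \(S_{2.001}\). Each \(P_i(\varepsilon)\) lies on \(S_{2.001}\) by construction, since \(x^p+y^p=1-(t_i\varepsilon)^p+(t_i\varepsilon)^p=1\) with \(x,y>0\). The points are distinct because their \(y\)-coordinates \(t_i\varepsilon\) are distinct. By Lemma~\ref{lem:convex} they determine a unique nondegenerate conic. By Theorem~\ref{thm:main}, that conic is a nondegenerate hyperbola, i.e.\ its discriminant satisfies \(\Delta>0\) in the form \eqref{eq:conic}; the affine change \eqref{eq:coords} does not affect the type.

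For the second sentence of the corollary, I note that a nondegenerate hyperbola is not an ellipse. A conic with \(\Delta>0\) cannot be of elliptic type, which requires \(\Delta<0\). Hence \(S_{2.001}\) contains a five-point subset whose unique conic is not an ellipse, which contradicts the ``only ellipses'' property suggested in Problem A33.

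There is no real obstacle here; all the difficulty sits in Theorem~\ref{thm:main}. The only point requiring care is that the \(\varepsilon_0\) in the theorem is merely asserted to exist, not made explicit. The corollary therefore gives an existence statement rather than an explicit five-point subset. If an explicit witness were wanted, I would instead take a small concrete \(\varepsilon\) and certify the sign of \(\Delta\) by exact evaluation of the \(5\times 6\) incidence system. That refinement is not needed for the statement as given.
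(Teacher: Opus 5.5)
Your proposal is correct and matches the paper's route: the corollary follows by specializing Theorem~\ref{thm:main} to \(p=2.001\) with a fixed one-sided profile such as \(t_i=i\) and sufficiently small \(\varepsilon\). Lemma~\ref{lem:convex} supplies uniqueness, and \(\Delta>0\) rules out elliptic type. The paper also mentions the classical osculating-conic route (negative equi-affine curvature near the vertex) as an alternative, and, as you anticipated, it offers its \(\varepsilon=10^{-3}\) computation only as a numerical illustration, not as a certified witness.
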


\section{Classical osculating-conic context}\label{sec:affine}

For a \(C^4\) graph \(u=f(y)\) with \(f''>0\), the equi-affine curvature,
with the usual normalization, is
\begin{equation}\label{eq:affine-curvature}
 \kappa_{\mathrm{aff}}
 =\frac{3f''f''''-5(f''')^2}{9(f'')^{8/3}}.
\end{equation}
This is Blaschke's formula \cite[p.~14, formula~(83)]{Blaschke}.  Classical
affine differential geometry identifies the unique conic having five-point
contact with the curve as its osculating conic; its type is elliptic, parabolic
or hyperbolic according as \(\kappa_{\mathrm{aff}}\) is positive, zero or
negative.  Thorbergsson and Umehara describe in their Introduction the
convergence of conics through five distinct points to the osculating conic as
all five points coalesce; their \S2C gives the equivalent multiplicity-five
definition, and they recall the curvature/type criterion before
Corollary~4.2 \cite[Introduction; \S2C; and the discussion preceding
Corollary~4.2]{TU}.

For the power graph \(f(y)=ay^p\), direct substitution in
\eqref{eq:affine-curvature} gives
\begin{equation}\label{eq:model-affine-numerator}
 3f''f''''-5(f''')^2
 =-a^2p^2(p-1)^2(p-2)(2p-1)y^{2p-6}.
\end{equation}
Thus its osculating conic is hyperbolic exactly when \(p>2\).  This local fact
is consistent with Proposition~\ref{prop:model}, but is much weaker than that
proposition, which treats arbitrary five distinct points rather than a
coalescing configuration.

For the actual Lam\'e branch, put
\[
 f_p(y)=1-(1-y^p)^{1/p}\qquad(0<y<1).
\]
Here
\[
 f_p''(y)=(p-1)y^{p-2}(1-y^p)^{1/p-2}>0,
\]
and a calculation gives
\begin{align}
 3f_p''f_p''''-5(f_p''')^2
 &=(p-1)^2(2p-1)y^{2p-6}(1-y^p)^{2/p-6}
 \notag\\[-2pt]
 &\quad\times\bigl((p+1)y^p(1-y^p)-(p-2)\bigr).
 \label{eq:lame-affine-numerator}
\end{align}
In particular,
\begin{equation}\label{eq:lame-affine-asymptotic}
 \kappa_{\mathrm{aff}}(y)
 \sim-\frac{(2p-1)(p-2)}{9(p-1)^{2/3}}
 y^{-2(p+1)/3}\longrightarrow-\infty
 \qquad(y\downarrow0).
\end{equation}
The analogous divergence at a smooth finite-type inflection endpoint is
classical \cite[Remark~(iii) following Proposition~5.1]{TU}.  That reference
works in a smooth finite-type setting, where the contact order is an integer.
For noninteger \(p\), the Lam\'e oval has only finite differentiability at the
vertex, so the cited statement does not apply there verbatim;
\eqref{eq:lame-affine-asymptotic} follows directly from
\eqref{eq:lame-affine-numerator} for every real \(p>2\).

Thus the osculating conic at every sufficiently small regular point \(y>0\) is
a hyperbola, and five distinct points sufficiently close to that point also
determine a hyperbola.  This is the classical route to
Corollary~\ref{cor:2001}; it does not imply Theorem~\ref{thm:main}, where the ratios
\(t_1:\cdots:t_5\) remain fixed while the entire profile approaches the flat
vertex.  The closed form \eqref{eq:lame-affine-numerator}, valid on the whole
open branch, yields the additional global sign information below.

Writing \(z=y^p\), the sign in \eqref{eq:lame-affine-numerator} is the sign of
\((p+1)z(1-z)-(p-2)\).  Since \(z(1-z)\le1/4\), the affine curvature on the
open quarter-arc is strictly negative for \(p>3\); for \(p=3\) it is negative
except at \(z=1/2\), the arc midpoint \(x=y=2^{-1/3}\), where it vanishes; and
for \(2<p<3\) it takes both signs.

The exact model result and this sign analysis suggest a natural global
question: for every \(p\ge3\), does every five-point subset of the open
first-quadrant arc of \(S_p\) determine a hyperbola?  The positive-minor
orientation of Lemma~\ref{lem:vander} would suffice, and the proof of
Theorem~\ref{thm:main} obtains that orientation for each fixed one-sided profile
after sufficiently strong contraction.  It cannot, however, hold on the whole
arc.

Indeed, let \(\sigma(x,y)=(y,x)\), which in the coordinates \eqref{eq:coords}
is the affine involution \(\sigma(u,y)=(1-y,1-u)\).  Take the fixed midpoint of
the quarter-arc together with two distinct
nonfixed points and their \(\sigma\)-images, the two orbits also being distinct,
and let \(Q=0\) be their unique conic.  Since the five-point set is
\(\sigma\)-invariant, uniqueness gives \(Q\circ\sigma=\lambda Q\); applying
\(\sigma\) twice shows that \(\lambda=\pm1\).  The negative sign would make
\(Q\) vanish on the fixed line \(u+y=1\), so this line would be a component of
\(Q\).  That line is the diagonal \(y=x\), which meets the arc only at the
midpoint, so the residual line would contain the other four points, contrary to
Lemma~\ref{lem:convex}.  Hence \(Q\circ\sigma=Q\).  Comparing the coefficients
of \(u^2\) and \(y^2\) in \eqref{eq:conic} gives
\[
  A=C.
\]
Thus the strict alternating sign pattern behind Proposition~\ref{prop:model}
is impossible for these symmetric configurations, and full-arc total
positivity is not a viable route.  If \(A\ne0\), normalization by \(A=1\) gives
\(C=1\) and
\[
  \Delta=B^2-4,
\]
so the conic is hyperbolic exactly when \(\lvert B\rvert>2\).  If \(A=0\), then
also \(C=0\), and nondegeneracy forces \(B\ne0\), so the conic is already
hyperbolic.  At \(p=3\) the symmetric confluent limit at the midpoint is the
parabolic osculating conic; after normalization by \(A=1\), it has
\(\lvert B\rvert=2\).  Thus \(\lvert B\rvert=2\) is the sharp confluent boundary
for the symmetric subfamily, while the local osculating-conic criterion alone
does not answer the global question.

There is also a classical obstruction within the smooth strictly convex class
of Thorbergsson and Umehara, where a closed curve is called strictly convex if
it is convex and has no inflection points \cite[\S2C]{TU}.
Mukhopadhyaya's theorem, in their modern treatment, gives such a curve at least
three inscribed osculating ellipses
\cite[Theorem~4.1]{TU}.  Such a curve therefore has elliptic five-point
configurations nearby.  Smooth Lam\'e ovals with flat vertices have inflection
points in this projective-differential sense; for noninteger \(p\), limited
differentiability at the vertices also places the oval outside their smoothness
hypotheses.  In either case, it still bounds a strictly convex body in the
ordinary convex-geometric sense of Lemma~\ref{lem:convex}.

\section{The sign lemma}

The following standard generalized-Vandermonde sign fact is a special case of
the theory of extended complete Chebyshev systems
\cite[Chapter~I]{KarlinStudden}; the proof is included for completeness.

\begin{lemma}[Generalized Vandermonde signs]\label{lem:vander}
If \(0<t_1<\cdots<t_m\) and \(\lambda_1<\cdots<\lambda_m\) are real, then
\[
 \det\bigl(t_i^{\lambda_j}\bigr)_{i,j=1}^m>0.
\]
Consequently, if \(\lambda_1<\cdots<\lambda_{m+1}\) and
\(\gamma_1,\dots,\gamma_{m+1}>0\), then the \(m\times(m+1)\) matrix
\(\bigl(\gamma_j t_i^{\lambda_j}\bigr)\) has rank \(m\), and every nonzero
vector in its nullspace has strictly alternating signs.
\end{lemma}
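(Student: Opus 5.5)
The plan is to prove the determinant inequality by induction on \(m\), using a generalized Rolle's theorem (Descartes' rule of signs for exponential sums). After that, the nullspace statement follows from Cramer's rule.

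\textbf{Step 1: no zero-determinant configuration.} I first show that for fixed \(\lambda_1<\cdots<\lambda_m\), the determinant \(D(t_1,\dots,t_m)=\det\bigl(t_i^{\lambda_j}\bigr)\) never vanishes on the open region \(0<t_1<\cdots<t_m\). Suppose \(D=0\) there. Then some nonzero coefficient vector \(c\) makes the function \(g(t)=\sum_{j=1}^m c_j t^{\lambda_j}\) vanish at the \(m\) distinct points \(t_1,\dots,t_m\). The key claim is that a nonzero generalized polynomial with \(m\) real exponents has at most \(m-1\) positive zeros. I prove this claim by induction on \(m\):
\begin{itemize}
\item Discard zero coefficients. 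The case \(m=1\) is clear.
\item Multiply \(g\) by \(t^{-\lambda_1}\). This does not change the positive zeros.
\item Differentiate. The term with exponent \(0\) drops out, leaving \(\sum_{j\ge2} c_j(\lambda_j-\lambda_1)t^{\lambda_j-\lambda_1-1}\), which has \(m-1\) distinct exponents.
\item By Rolle's theorem, if \(g\) had \(m\) positive zeros, this derivative would have \(m-1\) positive zeros, contradicting the induction hypothesis.
\end{itemize}
Hence \(D\neq0\) throughout the region.

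\textbf{Step 2: fixing the sign.} The region \(\{0<t_1<\cdots<t_m\}\) is connected and \(D\) is continuous and nonvanishing on it, so \(D\) has constant sign. Since \(D\) is also continuous in \(\lambda\) and nonvanishing on the connected set \(\{\lambda_1<\cdots<\lambda_m\}\), the sign is constant jointly in \((t,\lambda)\). I evaluate it at \(\lambda_j=j-1\), where \(D\) is the classical Vandermonde determinant \(\prod_{i<k}(t_k-t_i)>0\). So \(D>0\) everywhere.

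Alternatively, one can avoid the continuity argument in \(\lambda\) by an induction that expands along the last point and sends \(t_m\to\infty\), so that the \(t_m^{\lambda_m}\) cofactor dominates. The connectedness route is cleaner, and I will use it.

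\textbf{Step 3: the nullspace statement.} Now take the \(m\times(m+1)\) matrix \(M=\bigl(\gamma_j t_i^{\lambda_j}\bigr)\). Deleting column \(k\) gives a square minor \(M_k\), which equals \(\prod_{j\ne k}\gamma_j\) times a generalized Vandermonde determinant with increasing exponents. By Step 2, \(M_k>0\), so \(M\) has rank \(m\) and a one-dimensional nullspace. By the standard cofactor (Cramer) identity, the nullspace is spanned by \(v_k=(-1)^{k}M_k\), \(k=1,\dots,m+1\): each row dotted with \(v\) is the expansion of an \(m+1\) square determinant with a repeated row, hence zero. Since every \(M_k>0\), the entries of \(v\) strictly alternate in sign, and so does every nonzero multiple of \(v\).

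\textbf{Main obstacle.} The only delicate point is the zero-counting step in Step 1. One must apply Rolle's theorem on \((0,\infty)\) with distinct real, possibly noninteger and negative, exponents. One must also check that differentiation keeps exactly \(m-1\) nonzero terms, which holds because \(\lambda_j-\lambda_1\neq0\) for \(j\ge2\).
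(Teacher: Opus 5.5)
Your proof is correct, and it differs from the paper's mainly in how the sign is fixed. The paper first computes the Wronskians of all initial subfamilies of \(t^{\lambda_1},\dots,t^{\lambda_m}\). It then places the system in the extended-complete-Chebyshev framework to get nonvanishing by repeated Rolle. Finally it fixes the sign by letting \(t_1,\dots,t_m\) coalesce, where the evaluation determinant is asymptotic to a positive multiple of the Wronskian.

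You prove the zero count directly by the Descartes-type trick: divide by \(t^{\lambda_1}\), differentiate, and induct on the number of nonzero terms. This needs no Wronskian. You then fix the sign by deforming the exponents rather than the points. \(D\) is jointly continuous and nonvanishing on the convex set \(\{0<t_1<\cdots<t_m\}\times\{\lambda_1<\cdots<\lambda_m\}\), because your zero count needs only distinct exponents. At \(\lambda_j=j-1\) it reduces to the classical Vandermonde product.

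Your route is more elementary and self-contained. In particular, it avoids the coalescence asymptotic relating the evaluation determinant to the Wronskian, which the paper quotes rather than proves. The paper's route is the one that generalizes: it works for any system with positive initial Wronskians. Your deformation argument instead relies on the special fact that the power family contains the ordinary monomials \(1,t,\dots,t^{m-1}\) in the same connected parameter region. The nullspace step via cofactor expansion is the same in both.
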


\begin{proof}
For any increasing subfamily \(\lambda_{j_1}<\cdots<\lambda_{j_k}\), the
Wronskian of \(t^{\lambda_{j_1}},\dots,t^{\lambda_{j_k}}\) is
\[
 t^{\sum_{r}\lambda_{j_r}-k(k-1)/2}
 \prod_{r<s}(\lambda_{j_s}-\lambda_{j_r})>0
 \qquad(t>0),
\]
as one sees by factoring \(t^{\lambda_{j_r}}\) from each column and
\(t^{-(i-1)}\) from each row, the remaining determinant being a Vandermonde
determinant in the \(\lambda\)'s built from the monic falling factorials.  All
Wronskians of initial subfamilies being positive, the family is an extended
complete Chebyshev system; by repeated use of Rolle's theorem no nontrivial
linear combination has \(k\) positive zeros, so the evaluation determinant at
distinct increasing points never vanishes.  Its sign is constant on the
connected set \(\{0<t_1<\cdots<t_m\}\) and is found by letting the points
coalesce, where the determinant is asymptotic to a positive multiple of the
Wronskian; this gives the displayed positive sign.

For the second statement, each maximal minor of \(\bigl(\gamma_jt_i^{\lambda_j}\bigr)\)
is a determinant of the first kind multiplied by the product of the \(\gamma\)'s
of the retained columns, hence positive; in particular the rank is \(m\).  The
vector \(v\) with \(v_j=(-1)^j\det M_{\hat\jmath}\), where \(M_{\hat\jmath}\)
omits column \(j\), spans the nullspace, and its signs alternate.
\end{proof}

\section{The model curve}\label{sec:model}

Everything is driven by the pure power curve
\(\Gamma_{p,a}\), for which the conclusion is exact and requires no smallness
whatsoever.

\begin{proposition}\label{prop:model}
Let \(p>2\) and \(a>0\).  Then any five distinct points of
\(\Gamma_{p,a}\) lie on a unique conic, and that conic is a nondegenerate
hyperbola.  Moreover, in the normalization \(A=1\) of \eqref{eq:conic} the
coefficient vector
\((F,E,C,D,B,A)\) has signs \(-,+,-,+,-,+\); in particular \(C<0\).
\end{proposition}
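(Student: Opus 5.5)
Uniqueness and nondegeneracy come directly from Lemma~\ref{lem:convex}, so the plan is to determine the sign of \(\Delta=B^2-4AC\) by reading off the coefficient signs from Lemma~\ref{lem:vander}.

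First parametrize \(\Gamma_{p,a}\) by \(y=t>0\), so the points are \((u_i,y_i)=(at_i^p,t_i)\) with \(0<t_1<\dots<t_5\) after relabeling. Substituting into \eqref{eq:conic}, the six monomials become
\[
 1,\; t,\; t^2,\; a t^p,\; a t^{p+1},\; a^2 t^{2p},
\]
which are attached to \(F,E,C,D,B,A\) respectively. Because \(p>2\), the exponents satisfy
\[
 0<1<2<p<p+1<2p,
\]
and they appear in exactly the coefficient order \((F,E,C,D,B,A)\). The positive factors \(\gamma_j\) are \(1,1,1,a,a,a^2\). The incidence conditions therefore say that \((F,E,C,D,B,A)\) lies in the nullspace of the \(5\times 6\) matrix \(\bigl(\gamma_j t_i^{\lambda_j}\bigr)\) with \(\lambda=(0,1,2,p,p+1,2p)\) strictly increasing. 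Lemma~\ref{lem:vander} applies: the matrix has rank \(5\), so the nullspace is one-dimensional, which reconfirms uniqueness, and every nonzero null vector has strictly alternating signs. In particular \(A\neq0\). Normalizing \(A=1>0\) and alternating backwards gives signs \(-,+,-,+,-,+\) for \((F,E,C,D,B,A)\), so \(C<0\).

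Then \(\Delta=B^2-4AC=B^2-4C>0\), since \(-4C>0\) and \(B^2\ge 0\). The conic is therefore of hyperbolic type, and together with nondegeneracy from Lemma~\ref{lem:convex} it is a nondegenerate hyperbola. This argument needs no smallness and no ordering of the \(t_i\) beyond distinctness.

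The only delicate point is checking that the exponent order matches the coefficient order; this is exactly where \(p>2\) enters, through \(2<p\). For \(p<2\) the columns for \(t^2\) and \(t^p\) would swap, and the sign pattern would no longer force \(AC<0\). Everything else is bookkeeping.
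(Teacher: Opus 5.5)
Your proposal is correct and follows the paper's proof essentially verbatim. Both use the evaluation matrix with exponents \(0<1<2<p<p+1<2p\) and positive factors \(1,1,1,a,a,a^2\), then Lemma~\ref{lem:vander} for rank five and strict sign alternation, then the normalization \(A=1\) giving \(C<0\) and \(\Delta=B^2-4C>0\), with nondegeneracy taken from Lemma~\ref{lem:convex}.
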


\begin{proof}
At the point of \(\Gamma_{p,a}\) with height \(y\), the six monomials of
\eqref{eq:conic} take the values
\[
 1,\quad y,\quad y^2,\quad a y^{p},\quad a y^{p+1},\quad a^2y^{2p},
\]
so the \(5\times6\) evaluation matrix at five distinct heights
\(0<y_1<\cdots<y_5\) is exactly of the shape treated in
Lemma~\ref{lem:vander}, with exponents
\[
 0<1<2<p<p+1<2p
\]
(the ordering uses \(p>2\) for \(2<p\) and \(p>1\) for \(p+1<2p\)) and positive
factors \(1,1,1,a,a,a^2\).  Hence the matrix has rank five, so the conic is
unique, and its coefficient vector alternates in sign.  Normalizing the last
coordinate \(A\) to be \(+1\) yields the sign pattern \(-,+,-,+,-,+\), so
\(C<0\) and
\[
 \Delta=B^2-4AC=B^2-4C>0 .
\]
The conic is therefore a hyperbola, and its nondegeneracy is already supplied
by Lemma~\ref{lem:convex}.
\end{proof}

\begin{corollary}\label{cor:powerarc}
If \(p>2\), \(a>0\), and \(0<r<s<\infty\), then the compact arc
\[
 \Gamma_{p,a}[r,s]=\{(ay^p,y):r\le y\le s\}
\]
is a nonconic real-analytic rectifiable arc every five distinct points of
which determine a nondegenerate hyperbola.
\end{corollary}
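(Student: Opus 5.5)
The corollary follows almost directly from Proposition~\ref{prop:model}, so the plan is to check its four assertions one at a time: every five distinct points give a nondegenerate hyperbola, the arc is real-analytic, it is rectifiable, and it is nonconic.

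\emph{Hyperbola property.} Five distinct points of \(\Gamma_{p,a}[r,s]\) are five distinct points of \(\Gamma_{p,a}\). By Proposition~\ref{prop:model}, together with Lemma~\ref{lem:convex} for uniqueness and nondegeneracy, their unique conic is a nondegenerate hyperbola. No compactness or smallness argument is needed.

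\emph{Regularity and rectifiability.} The arc is parametrized by \(y\mapsto(ay^p,y)\) on \([r,s]\subset(0,\infty)\). Since \(y^p=e^{p\log y}\) is real-analytic for \(y>0\), the parametrization is real-analytic. Its derivative \((apy^{p-1},1)\) never vanishes, so the arc is regular. Its length is
\[
 \int_r^s\sqrt{1+a^2p^2y^{2p-2}}\,dy<\infty,
\]
which is the integral of a continuous function over a compact interval; hence the arc is rectifiable.

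\emph{Nonconic.} I would argue this in either of two ways.

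The first route is by contradiction. Suppose the arc lies on a conic \(Q=0\). Then \(Q(ay^p,y)\equiv0\) on \([r,s]\), which reads
\[
 F+Ey+Cy^2+Day^p+Bay^{p+1}+Aa^2y^{2p}=0 \quad\text{for all } y\in[r,s].
\]
The exponents \(0,1,2,p,p+1,2p\) are distinct because \(p>2\). By Lemma~\ref{lem:vander}, applied with \(m=6\) and six distinct points of \([r,s]\), the \(6\times6\) evaluation matrix is nonsingular. So all six coefficients vanish, which contradicts the conic being a genuine conic.

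The second route avoids the sign lemma. By \eqref{eq:model-affine-numerator}, the equi-affine curvature is a nonconstant power of \(y\) when \(p>2\): its exponent is \(2p-6-\tfrac{8}{3}(p-2)=-\tfrac{2}{3}(p+1)\neq0\). A nondegenerate conic has constant affine curvature, so the arc cannot lie on one. I will present the first route as the main argument, since it is self-contained within the paper.

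The only step needing any care is the nonconic claim. The real point there is to ensure that the identically vanishing combination forces all six coefficients to be zero, and Lemma~\ref{lem:vander} handles exactly this.
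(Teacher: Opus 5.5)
Your proposal is correct and follows the paper's proof: the five-point claim comes from Proposition~\ref{prop:model}, and nonconicity comes from substituting $u=ay^p$ and applying Lemma~\ref{lem:vander} at six distinct points of $[r,s]$ to the strictly ordered exponents $0<1<2<p<p+1<2p$. Your explicit analyticity and rectifiability checks are fine. Your alternative affine-curvature route also works, provided you add that a degenerate conic (a line pair) cannot contain the arc, which follows from the strict convexity in Lemma~\ref{lem:convex}.
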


\begin{proof}
The arc is real analytic and rectifiable, and the five-point assertion is
Proposition~\ref{prop:model}.  If a conic contained the arc, then substituting
\(u=ay^p\) in its equation would give a linear combination of the six
distinct powers
\[
 1,\quad y,\quad y^2,\quad y^p,\quad y^{p+1},\quad y^{2p}
\]
that vanished on \([r,s]\).  Evaluating at any six distinct points of the
interval contradicts Lemma~\ref{lem:vander}.
\end{proof}

\begin{remark}[Anisotropic scaling]\label{rem:scaling}
For \(\lambda>0\) the map \(L_\lambda(u,y)=(\lambda^pu,\lambda y)\) is linear
and invertible, hence carries conics to conics of the same type, and it
preserves \(\Gamma_{p,a}\).  Proposition~\ref{prop:model} is therefore invariant
under contracting a configuration toward the origin, which is precisely why
the column scalings used in the next proof are the natural ones.
\end{remark}

\section{Proof of Theorem~\ref{thm:main}}

Along the upper right branch of \(S_p\),
\begin{equation}\label{eq:expansion}
 u=1-(1-y^p)^{1/p}=\alpha y^p+O(y^{2p}),
 \qquad \alpha=\frac1p,\quad y\downarrow0,
\end{equation}
the expansion proceeding in powers of \(y^p\) alone.  Write \(y_i=\varepsilon
t_i\) and consider the \(5\times6\) evaluation matrix of \eqref{eq:conic} with
columns ordered as \(1,\ y,\ y^2,\ u,\ uy,\ u^2\).  After dividing these columns
respectively by
\[
 1,\ \varepsilon,\ \varepsilon^2,\ \varepsilon^p,
 \ \varepsilon^{p+1},\ \varepsilon^{2p},
\]
equation \eqref{eq:expansion} shows that the matrix converges, as
\(\varepsilon\downarrow0\), to the matrix with columns
\begin{equation}\label{eq:limitcols}
 1,\quad t,\quad t^2,\quad \alpha t^p,\quad \alpha t^{p+1},
 \quad \alpha^2t^{2p}
\end{equation}
evaluated at \(t=t_1,\dots,t_5\); that is, to the evaluation matrix of the
model curve \(\Gamma_{p,\alpha}\) at heights \(t_1,\dots,t_5\)
(Remark~\ref{rem:scaling} explains the choice of divisors).  Let
\[
 (f,e,c,d,b,1)
\]
be its nullvector normalized so that the last coordinate is one.  By
Proposition~\ref{prop:model},
\begin{equation}\label{eq:cnegative}
 c<0 .
\end{equation}

The maximal minor obtained by omitting the \(u^2\)-column is positive in the
limit, hence nonzero for all sufficiently small \(\varepsilon\); therefore
\(A\neq0\) and we may normalize \eqref{eq:conic} by \(A=1\).  The normalized
nullvector is a ratio of minors and so depends continuously on
\(\varepsilon\).  Undoing the column divisions gives
\begin{align*}
 F&=\varepsilon^{2p}(f+o(1)),
 &E&=\varepsilon^{2p-1}(e+o(1)),
 &C&=\varepsilon^{2p-2}(c+o(1)),\\
 D&=\varepsilon^p(d+o(1)),
 &B&=\varepsilon^{p-1}(b+o(1)).
\end{align*}
Note that \(B^2\) and \(C\) both contribute at the same order
\(\varepsilon^{2p-2}\); no term dominates the other, and it is the sign
\eqref{eq:cnegative} that is decisive.  Indeed
\[
 \frac{\Delta_\varepsilon}{\varepsilon^{2p-2}}
 =\frac{B^2-4C}{\varepsilon^{2p-2}}
 \longrightarrow b^2-4c>0 ,
\]
so \(\Delta_\varepsilon>0\) for all sufficiently small \(\varepsilon>0\) and the
conic is of hyperbolic type.  Its uniqueness and nondegeneracy follow from
Lemma~\ref{lem:convex}.  Finally the affine change \eqref{eq:coords} does not
alter the conic type. \qed

\section{One-sidedness is essential}\label{sec:straddle}

The hypothesis that the five points lie on one side of the vertex cannot be
dropped, and the reason is the same condition \(p>2\), here in the equivalent
form \(2^p>4\), that drives Theorem~\ref{thm:main}.

\begin{proposition}\label{prop:straddle}
Let \(p>1\), \(p\neq2\), and for small \(\varepsilon>0\) let
\[
 u_j=1-\bigl(1-(j\varepsilon)^p\bigr)^{1/p}\qquad(j=1,2),
\]
so that the five points of \(S_p\) with \(x>0\) and
\(y=0,\pm\varepsilon,\pm2\varepsilon\) are \((0,0)\), \((u_1,\pm\varepsilon)\),
\((u_2,\pm2\varepsilon)\) in the coordinates \eqref{eq:coords}.  For all
sufficiently small \(\varepsilon>0\) the unique conic through these five points
is
\begin{equation}\label{eq:straddleconic}
 (4u_1-u_2)\,u^2
 +\frac{u_1u_2(u_1-u_2)}{\varepsilon^{2}}\,y^2
 +(u_2^2-4u_1^2)\,u=0,
\end{equation}
and it is a nondegenerate ellipse when \(p>2\) and a nondegenerate hyperbola
when \(1<p<2\).
\end{proposition}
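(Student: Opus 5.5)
The plan is to exploit the reflection symmetry \(y\mapsto -y\), verify the displayed conic by direct substitution, and then read off its type from the signs of its three coefficients, using the expansion \eqref{eq:expansion}.

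\emph{Symmetry reduction.} The five-point set is invariant under \(\tau(u,y)=(u,-y)\). By the uniqueness in Lemma~\ref{lem:convex} (valid since \(p>1\)), the conic \(Q=0\) through the points satisfies \(Q\circ\tau=\lambda Q\) with \(\lambda=\pm1\). If \(\lambda=-1\), then \(Q\) vanishes on the fixed line \(y=0\). That line meets the oval near the vertex only at the origin, so the residual line would have to carry the other four points, contradicting Lemma~\ref{lem:convex}. Hence \(Q\) is even in \(y\), so \(E=B=0\), and the conic has the form \(F+Cy^2+Du+Au^2=0\). The origin lies on it, so \(F=0\).

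\emph{Solving for the coefficients.} The remaining unknowns \((A,C,D)\) are fixed up to scale by the two equations
\[
Au_1^2+C\varepsilon^2+Du_1=0,\qquad Au_2^2+4C\varepsilon^2+Du_2=0 .
\]
The cross product of their coefficient rows \((u_j^2,\varepsilon^2,u_j)\) gives, after an overall sign change,
\[
A=\varepsilon^2(4u_1-u_2),\qquad C=u_1u_2(u_1-u_2),\qquad D=\varepsilon^2(u_2^2-4u_1^2).
\]
Dividing by \(\varepsilon^2\) yields \eqref{eq:straddleconic}. Uniqueness again shows this is the conic, provided the coefficient vector is nonzero, which the asymptotics below confirm.

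\emph{Type via asymptotics.} With \(F=B=E=0\) we have \(\Delta=-4AC\), so the type is decided by the sign of \(AC\). By \eqref{eq:expansion}, \(u_j=(j\varepsilon)^p/p+O(\varepsilon^{2p})\), so \(u_1<u_2\) for small \(\varepsilon\), and
\[
C=u_1u_2(u_1-u_2)<0,\qquad 4u_1-u_2=\frac{\varepsilon^p}{p}\bigl(4-2^p\bigr)+O(\varepsilon^{2p}).
\]
For \(p>2\) we have \(4-2^p<0\), so \(A<0\) and \(C<0\); then \(\Delta<0\) and the conic is of elliptic type. For \(1<p<2\) the same expansion gives \(A>0\) and \(C<0\), hence \(\Delta>0\) and hyperbolic type. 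Nondegeneracy in both cases again comes from Lemma~\ref{lem:convex}. The same expansion also shows \(u_2^2-4u_1^2\ne0\) for small \(\varepsilon\), so the conic is a genuine ellipse or hyperbola rather than a degenerate one.

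\emph{Main obstacle.} The only delicate point is the case \(1<p<2\): there the error term \(O(y^{2p})\) in \eqref{eq:expansion} must still be of strictly smaller order than \(\varepsilon^p\). Since \(2p>p\), this holds, and the factor \(4-2^p\ne0\) then dominates. This is exactly where \(p\neq2\) is needed.
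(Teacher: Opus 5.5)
Your proof is correct and follows the paper's argument: the conic is identified using the uniqueness in Lemma~\ref{lem:convex}, and its type comes from $B=0$, $\Delta=-4AC$, $C<0$, and the sign of $4u_1-u_2=\tfrac{\varepsilon^p}{p}\bigl(4-2^p+O(\varepsilon^p)\bigr)$. The only difference is that you derive the displayed conic from the reflection $y\mapsto -y$ (the same uniqueness trick the paper uses with $\sigma$ in Section~\ref{sec:affine}) and a cross product, whereas the paper verifies it by direct substitution; also, the error term $O(\varepsilon^{2p})$ is harmless for every $p>1$, so it is not a difficulty specific to $1<p<2$.
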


\begin{proof}
Direct substitution verifies that \eqref{eq:straddleconic} vanishes at
\((0,0)\), at \((u_1,\pm\varepsilon)\) and at \((u_2,\pm2\varepsilon)\); for
instance at \((u_1,\pm\varepsilon)\) the left-hand side equals
\[
 u_1\bigl[(4u_1-u_2)u_1+u_2(u_1-u_2)+u_2^2-4u_1^2\bigr]=0 .
\]
By \eqref{eq:expansion},
\(u_j=\alpha(j\varepsilon)^p\bigl(1+O(\varepsilon^p)\bigr)\), so
\begin{equation}\label{eq:key}
 4u_1-u_2=\alpha\varepsilon^p
 \bigl(4-2^p+O(\varepsilon^p)\bigr),
\end{equation}
which is nonzero for small \(\varepsilon\) because \(p\neq2\); hence
\eqref{eq:straddleconic} really is a conic, and by Lemma~\ref{lem:convex} it is
the unique one through the five points.  Its quadratic part has \(B=0\), so
\[
 \Delta=-4(4u_1-u_2)\cdot\frac{u_1u_2(u_1-u_2)}{\varepsilon^{2}} .
\]
Here \(u_1u_2>0\) and \(u_1-u_2<0\).  By \eqref{eq:key} the factor
\(4u_1-u_2\) is negative for \(p>2\) and positive for \(1<p<2\), whence
\(\Delta<0\) in the first case and \(\Delta>0\) in the second.  Nondegeneracy
follows from Lemma~\ref{lem:convex}.
\end{proof}

Thus, for every \(p>2\), arbitrarily close to the same vertex there are
one-sided configurations determining hyperbolas and symmetric straddling
configurations determining ellipses.  This is why the abstract speaks of
contracting a \emph{fixed one-sided profile}, and it also shows that the answer
to A33 for \(S_p\) is genuinely mixed rather than uniformly hyperbolic.

Figure~\ref{fig:profiles} displays this contrast on the anisotropically
magnified model when \(p=3\).

\begin{figure}[H]
\centering
\begin{tikzpicture}[font=\small, line cap=round, line join=round]
  \begin{scope}[x=0.66cm,y=0.055cm]
    \draw[->,gray!75] (-0.25,0) -- (5.65,0)
      node[below right,black] {$T$};
    \draw[->,gray!75] (0,-1.5) -- (0,53);
    \node[black,anchor=east] at (-0.08,48) {$U$};
    \foreach \xx in {1,...,5}
      \draw[gray!55] (\xx,0) -- ++(0,1.2)
        node[below=2pt,black] {\scriptsize \(\xx\)};
    \foreach \yy in {10,20,30,40}
      \draw[gray!55] (0,\yy) -- ++(0.08,0)
        node[left=2pt,black] {\scriptsize \(\yy\)};
    \draw[black!70,thick,domain=0:5.45,samples=150,variable=\t]
      plot ({\t},{\t^3/3});
    \draw[blue!65!black,very thick,domain=0:5.45,samples=180,variable=\t]
      plot
      ({\t},
       {(46.6666667*\t-350
       +351.1409973*sqrt(1-0.279318735*\t
       +0.0288402271*\t^2))/2});
    \foreach \tt in {1,...,5}
      \fill[orange!85!black] (\tt,{\tt^3/3}) circle[radius=2.2pt];
    \node[anchor=south,font=\bfseries] at (3.1,55.5)
      {(a) One-sided profile};
    \node[blue!65!black,fill=white,inner sep=1.5pt] at (3.75,10)
      {\(\Delta=3556>0\)};
    \node[black!70,anchor=west] at (4.65,35)
      {\(U=T^3/3\)};
  \end{scope}

  \begin{scope}[xshift=6.7cm,x=0.82cm,y=0.56cm]
    \draw[->,gray!75] (-2.4,0) -- (2.43,0)
      node[below right,black] {$T$};
    \draw[->,gray!75] (0,-0.25) -- (0,5.25);
    \node[black,anchor=east] at (-0.06,4.55) {$U$};
    \foreach \xx in {-2,-1,1,2}
      \draw[gray!55] (\xx,0) -- ++(0,0.08)
        node[below=2pt,black] {\scriptsize \(\xx\)};
    \foreach \yy in {1,2,3,4,5}
      \draw[gray!55] (0,\yy) -- ++(0.06,0)
        node[left=2pt,black] {\scriptsize \(\yy\)};
    \draw[black!70,thick,domain=-2.25:2.25,samples=150,variable=\t]
      plot ({\t},{abs(\t)^3/3});
    \draw[blue!65!black,very thick,domain=0:360,samples=180,variable=\ang]
      plot ({2.0044593*cos(\ang)},{2.5+2.5*sin(\ang)});
    \foreach \tt in {-2,-1,0,1,2}
      \fill[orange!85!black] (\tt,{abs(\tt)^3/3}) circle[radius=2.2pt];
    \node[anchor=south,font=\bfseries] at (0.35,5.7)
      {(b) Symmetric profile};
    \node[blue!65!black,fill=white,inner sep=1.5pt] at (1.25,3.75)
      {\(\Delta=-56/9<0\)};
    \node[black!70,anchor=west] at (0.55,1.05)
      {\(U=|T|^3/3\)};
  \end{scope}
\end{tikzpicture}
\small
\caption{Anisotropic blow-up \(T=y/\varepsilon\), \(U=u/\varepsilon^p\) for
\(p=3\).  Black curves are the limiting Lam\'e models and the dots are the
defining points.  Both conic equations are normalized so that the coefficient
of \(U^2\) is one.  In (a), the coefficient vector
\((F,E,C,D,B,A)=
(-200,\frac{1330}{3},-\frac{3101}{9},350,-\frac{140}{3},1)\)
displays the alternating sign pattern of Proposition~\ref{prop:model} and has
\(\Delta=3556\).  The symmetric points in (b) lie on the ellipse
\(U^2+\frac{14}{9}T^2-5U=0\), for which \(\Delta=-\frac{56}{9}\).}
\label{fig:profiles}
\end{figure}

\paragraph{A concrete numerical illustration.}
Choose \(p=2.001\), \(t_i=i\), and \(\varepsilon=10^{-3}\).  With \(A=1\), a
120-digit computation gives, to the displayed precision,
\[
\begin{split}
 u^2&-0.0118470556945\,uy-0.0241022648416\,y^2\\
 &+0.0485159650138\,u+4.05547453\cdot10^{-8}\,y
 -1.16146502\cdot10^{-11}=0,
\end{split}
\]
with
\[
 \Delta=0.0965494120951>0,
\]
so the computed coefficients indicate a hyperbola.  These rounded values are
only a numerical illustration; Corollary~\ref{cor:2001} follows rigorously
from Theorem~\ref{thm:main}.

\section{Why the threshold \texorpdfstring{\(p=2\)}{p=2} appears, and what
happens below it}

At \(p=2\) the exponents \(2\) and \(p\) coincide, so the strict
generalized-Vandermonde ordering in \eqref{eq:limitcols} fails and
Lemma~\ref{lem:vander} no longer applies; consistently, \(S_2\) is the circle
itself, so every five-point conic is that ellipse.  For every \(p>2\), however
small \(p-2\) is, the exponents become strictly ordered and the
alternating-minor argument forces the hyperbolic sign near the vertex.

For \(1<p<2\) the exponents are ordered instead as
\(0<1<p<2<p+1<2p\), so the alternating pattern places the \(y^2\)-coefficient
in the fourth slot and gives \(c>0\).  The sign of \(b^2-4c\) is then
\emph{not} determined by Lemma~\ref{lem:vander} alone.  In fact both types
occur already on the model curve.  For \(p=3/2\), \(a=2/3\), and normalization
\(A=1\), the five heights \(1,4,9,16,25\) give
\[
 B=-\frac{1330}{137},\qquad C=\frac{42980}{1233},\qquad
 \Delta=-\frac{7632940}{168921}<0,
\]
whereas the heights \(1,4,9,36,625\) give
\[
 B=-\frac{3724}{153},\qquad C=\frac{67088}{459},\qquad
 \Delta=\frac{182224}{23409}>0.
\]
Both conics are nondegenerate by Lemma~\ref{lem:convex}.  The same perturbation argument used in
Theorem~\ref{thm:main} shows that the corresponding fixed
one-sided profiles on the Lam\'e oval retain these respective types when
contracted sufficiently close to the vertex.  Thus the direct analogue of
Theorem~\ref{thm:main} below \(p=2\) is false, although
Proposition~\ref{prop:straddle} does reverse cleanly there.

\end{document}